\newcommand{\N}{\mathbb{N}}
\newcommand{\R}{\mathbb{R}}
\newcommand{\K}{\mathbb{K}}
\newcommand{\Lin}{\mathscr{L}}
\newcommand{\liminfty}{\lim\limits_{n \to +\infty}}
\newcommand{\cof}{\operatorname{cof}}
\newcommand{\NA}{\operatorname{NA}}
\theoremstyle{plain}
\newtheorem{prop}{Proposition}
\newtheorem{theorem}[prop]{Theorem}
\newtheorem{lemma}[prop]{Lemma}
\newtheorem{corollary}[prop]{Corollary}
\newtheorem{thmAlfa}{Theorem}
\theoremstyle{definition}
\newtheorem{pb}{Problem}
\newtheorem*{thank}{Acknowledgments}
\theoremstyle{remark}
\begin{document}

\allowdisplaybreaks

\title[Norm attaining operators into locally AMUC Banach spaces]{Norm attaining operators into locally asymptotically midpoint uniformly convex Banach spaces}

\author{A.~Fovelle}
\address{Institute of Mathematics (IMAG) and Department of Mathematical Analysis, University of Granada, 18071, Granada, Spain}
\email{audrey.fovelle@ugr.es}

\thanks{Research partially supported by MCIN/AEI/10.13039/501100011033 grant PID2021-122126NB-C31 and by ``Maria de Maeztu'' Excellence Unit IMAG, reference CEX2020-001105-M funded by MCIN/AEI/10.13039/501100011033}

\subjclass[2020]{Primary 46B04; Secondary 46B20, 46B25, 46B28}
\keywords{Banach spaces, Norm-attaining operators, Locally AMUC}

\begin{abstract} We prove that if $Y$ is a locally asymptotically midpoint uniformly convex Banach space which has either a normalized, symmetric basic sequence that is not equivalent to the unit vector basis in $\ell_1$, or a normalized sequence with upper p-estimates for some $p>1$, then $Y$ does not satisfy Lindenstrauss' property B.
\end{abstract}

\maketitle

 \setcounter{tocdepth}{1}

\section{Introduction}

The study of the denseness of norm-attaining operators started with the seminal paper by Bishop and Phelps \cite{BP}, in which they prove that every functional can be approximated by norm-attaining ones. In this same paper, they ask the following question: given $X$ and $Y$ two Banach spaces, does the set of norm-attaining operators from $X$ to $Y$, denoted by $\NA(X,Y)$ (that is $T \in \NA(X,Y)$ if $\|Tx\|=\|T\|$ for some $x \in B_X$, the unit ball of $X$) is dense in $\Lin(X,Y)$, the space of all (linear continous) bounded operators? This question was answered by the negative in 1963 by Lindenstrauss \cite{Lin}, who also gave some positive examples. Following \cite{Lin}, we say that a Banach space $Y$ has property B if $\NA(X,Y)$ is dense in $\Lin(X,Y)$ for every Banach space $X$. The negative example of Lindenstrauss was the following: any strictly convex space containing an isomorphic copy of $c_0$ fails property B. Thoroughly studied, important examples of Banach spaces having or failing property B have been given since (see for exemple \cite{Lin} \cite{Schachermayer} \cite{JW} \cite{Uhl} \cite{JW2} \cite{AAP}). \\
Let us mention that the first example of a reflexive space failing property B is due to Gowers in 1990 \cite{Go}, who proved that $\ell_p$ does not have property B when $p \in (1, \infty)$. \\

In \cite{Aguirre}, Aguirre proved that strictly convex spaces satisfying the extra condition of having either a normalized, symmetric basic sequence which is not equivalent to the unit vector basis in $\ell_1$, or a normalized sequence with upper p-estimates for some $p>1$ (see Section 2 for the definition), do not satisfy property B. This was extended to all strictly convex Banach spaces by Acosta \cite{AcostaSC}, who also proved that the same results holds for infinite-dimensional $L_1(\mu)$ spaces \cite{AcostaL1}. \\

The result of Aguirre was enough to deduce what he describes as the main result of his paper, namely: every infinite-dimensional uniformly convex Banach space fails property B. This is the result we will generalize in the asymptotic setting. To be more specific, we will prove the following theorem. \\

\begin{thmAlfa} \label{thm ppal}
Let $Y$ be a locally AMUC space which has either a normalized, symmetric basic sequence which is not equivalent to the unit vector basis in $\ell_1$, or a normalized sequence with upper $p$-estimates for some $1<p<\infty$. Then $Y$ fails property B.
\end{thmAlfa}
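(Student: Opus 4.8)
The plan is to exhibit a \emph{single} Banach space $X$ together with an operator $T\in\Lin(X,Y)$, $\|T\|=1$, that is bounded away from $\NA(X,Y)$; since property B asks that $\NA(X,Y)$ be dense for \emph{every} $X$, this is enough. First I would reduce the two hypotheses to one workable statement about the range. After passing to a subsequence one may assume the given normalized sequence $(y_n)$ is weakly null and satisfies $\tfrac1n\big\|\sum_{i=1}^n y_i\big\|_Y\to 0$: for a normalized symmetric basic sequence, failing to be $\ell_1$-equivalent is precisely equivalent to being weakly null and to $\inf_n\tfrac1n\big\|\sum_{i=1}^n y_i\big\|_Y=0$ (the limit existing by subadditivity), while upper $p$-estimates with $p>1$ give $\big\|\sum_{i=1}^n y_i\big\|_Y\le Cn^{1/p}$ directly, and Rosenthal's theorem lets one pass to a weakly null subsequence (replacing $(y_n)$ by normalized differences if necessary). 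Put $Z=\overline{\vect}\{y_n\}\subseteq Y$.

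Next I would build the domain, following the Lindenstrauss--Aguirre--Acosta scheme. Define $X$ as the completion of $c_{00}$ under a norm $\|\cdot\|_X$ assembled from the $c_0$-norm and from the quantities $\big\|\sum_{i\in F}y_i\big\|_Y$ (which, by the previous paragraph, are small relative to $\operatorname{Card}(F)$), with canonical unit vector basis $(e_n)$, and set $Te_n=\mu_n y_n$ for a sequence $0<\mu_n\uparrow 1$ rising slowly enough that $T$ extends to a norm-one operator $X\to Z$ which does not attain its norm. The norm is to be engineered so as to enjoy a \emph{flattening property}: there is a constant $c>0$ such that for every $x_0\in S_X$ there is a normalized block sequence $(z_k)$ of $(e_n)$ with $z_k\to 0$ weakly in $X$, with $\limsup_k\|x_0+z_k\|_X\le 1$ and $\limsup_k\|x_0-z_k\|_X\le 1$, and with $\liminf_k\|Tz_k\|_Y\ge c$. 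Heuristically this works because $\|\cdot\|_X$ dominates the sup-norm (so adding a far-out block does not raise the norm, exactly as in $c_0$), while the sublinear growth of the partial sums of $(y_n)$ keeps $T$ bounded; the assumption that $(y_n)$ is \emph{not} $\ell_1$-equivalent (resp.\ that $p>1$) is exactly what renders these two demands compatible, and is the reason $L_1$ must be treated separately.

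The non-approximation argument then runs as follows. Fix $\varepsilon>0$ small (to be compared with $c$ and with the local AMUC modulus of $Y$) and suppose $S\in\Lin(X,Y)$ with $\|S-T\|<\varepsilon$ attains its norm at some $x_0\in S_X$, so $\|Sx_0\|_Y=\|S\|\ge 1-\varepsilon$. Take $(z_k)$ from the flattening property and set $u:=Sx_0/\|S\|\in S_Y$ and $w_k:=Sz_k/\|S\|$. From $\|S(x_0\pm z_k)\|_Y\le\|S\|\,\|x_0\pm z_k\|_X\le\|S\|(1+o(1))$ we get $\max\{\|u+w_k\|_Y,\|u-w_k\|_Y\}\le 1+o(1)$. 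On the other hand $Sz_k=Tz_k+(S-T)z_k$, where $(Tz_k)$ is weakly null (being, up to normalization, a block sequence of the weakly null sequence $(y_n)$ with controlled coefficients) and $\|(S-T)z_k\|_Y\le\varepsilon$; passing to a subsequence along which $(S-T)z_k$ converges weakly to some $v$ with $\|v\|_Y\le\varepsilon$, the sequence $w_k-v/\|S\|$ is weakly null with $\liminf_k\|w_k-v/\|S\|\|_Y\ge(c-2\varepsilon)/\|S\|>0$. Local asymptotic midpoint uniform convexity of $Y$ at $u$ then yields a $\delta>0$, depending only on $u$ and on the lower bound $(c-2\varepsilon)/\|S\|$, with $\limsup_k\max\{\|u+(w_k-v/\|S\|)\|_Y,\|u-(w_k-v/\|S\|)\|_Y\}\ge 1+\delta$, hence $\limsup_k\max\{\|u+w_k\|_Y,\|u-w_k\|_Y\}\ge 1+\delta-\varepsilon/\|S\|$. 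Comparing with the bound $\le 1+o(1)$ above forces $1\ge 1+\delta-\varepsilon/\|S\|$, i.e.\ $\varepsilon\ge\delta(1-\varepsilon)$, which is false once $\varepsilon$ is small enough. Thus no operator in a small ball around $T$ attains its norm, so $\NA(X,Y)$ is not dense and $Y$ fails property B.

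I expect the main obstacle to be the construction of $X$ itself: producing the right norm on $c_{00}$ from the two hypotheses on $(y_n)$, checking that $T$ is bounded and norm-one, and above all verifying the flattening property for an \emph{arbitrary} $x_0\in S_X$ — that is, producing the weakly null block sequence $(z_k)$ with $\|Tz_k\|_Y$ bounded below by a constant independent of $x_0$ and with $\|x_0\pm z_k\|_X\to 1$. Everything else is, up to the technical step of arranging $(y_n)$ weakly null, the routine perturbation-plus-convexity computation above; the only effect of replacing strict convexity (as in Aguirre--Acosta) by local AMUC is that one must feed the convexity modulus with weakly null perturbations rather than arbitrary ones, which is automatic here because the relevant perturbations are block images of the weakly null sequence $(y_n)$.
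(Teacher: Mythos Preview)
Your strategy coincides with the paper's: use a $c_0$-like domain whose unit sphere admits far-out flat perturbations, send its basis onto $(y_n)$, and let local AMUC rule out norm attainment. What you flag as the ``main obstacle'' is precisely what the paper outsources: the domain is taken to be $X=d_*(w)$, the canonical predual of a Lorentz sequence space, whose flattening property is the known Lemma~\ref{lemme pts extremaux} (for each $x_0\in S_{d_*(w)}$ there exist $m$ and $\delta=\delta(x_0)\in(0,1)$ with $\|x_0\pm\delta e_n\|\le 1$ for all $n\ge m$). The operator with $Te_n=y_n$ is then provided by Aguirre --- choosing an admissible $w$ adapted to the symmetric basic sequence in the first case, and taking $w_n=1/n$ and factoring $G=d_*(w)\hookrightarrow\ell_p\to Y$ in the upper-$p$ case. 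No ad hoc norm on $c_{00}$ and no weights $\mu_n\uparrow 1$ are needed.

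Two remarks on your non-approximation step. First, since you already stipulate that $(z_k)$ is weakly null in $X$, the sequence $((S-T)z_k)$ is weakly null in $Y$, so your weak limit $v$ equals $0$; the detour through $v$ and the resulting term $\varepsilon/\|S\|$ are superfluous, and with them the seemingly circular dependence of $\varepsilon$ on the AMUC modulus $\delta$ disappears. Second, and more to the point, your flattening hypothesis asks for \emph{normalized} $z_k$ with a \emph{uniform} lower bound $\|Tz_k\|\ge c$ together with $\|x_0\pm z_k\|\to 1$. In $d_*(w)$ this combination is not available (e.g.\ $x_0=e_1$ gives $\|e_1\pm e_n\|>1$); what one has is $\|x_0\pm\delta(x_0)e_n\|\le 1$ with $\delta(x_0)$ depending on $x_0$, which would reinstate the circularity in your one-step scheme. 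The paper avoids this by decoupling: it first proves that \emph{every} $S\in\NA(d_*(w),Y)$ satisfies $\|Se_n\|\to 0$ --- here both $\delta(x_0)$ and the AMUC modulus may depend on $S$ and its norming point, but one only needs them to be positive --- and then observes that any $S$ with $\|S-T\|<1/2$ has $\|Se_n\|\ge 1/2$, hence lies outside $\NA$. This two-step structure is exactly what lets \emph{local} (as opposed to uniform) AMUC suffice.
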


\section{Definitions and notation}

All Banach spaces in these notes are assumed to be real. We denote the closed unit ball of a Banach space $X$ by $B_X$, and its unit sphere by $S_X$. Given a Banach space $X$ with norm $\|\cdot\|_X$, we simply write $\|\cdot\|$ as long as it is clear from the context on which space it is defined. \\

First, let us define locally asymptotically midpoint uniformly convex spaces.

\subsection{Locally AMUC Banach spaces}

Let $Y$ be a Banach space. Let us denote by $\cof(Y)$ the set of all closed subspaces of $Y$ of finite codimension. For $y \in S_Y$ and $t \in \R^+$, let
\begin{align*}
\tilde{\delta}_Y(y,t)&=\sup_{E \in \cof(Y)} \inf_{z \in S_E} \max\{ \|y+tz\|, \|y-tz\|\}-1 \\
&=\sup_{E \in \cof(Y)} \inf_{\substack{z \in E \\ \|z\| \geq 1}} \max\{ \|y+tz\|, \|y-tz\|\}-1
\end{align*}
where the second equality follows from the fact that, for every $z \in Z$, the map $s \in (0, \infty) \mapsto \max \{ \|y+sz\|, \|y-sz\| \}$ is non-decreasing. \\
We say that $Y$ is \textit{locally asymptotically midpoint uniformly convex} (locally AMUC) if $\tilde{\delta}_Y(y,t)>0$ for every $y \in S_Y$ and every $t>0$. \\

\begin{prop}[Corollary 2.3 \cite{DKRRZ}] \label{prop DKRRZ}
If $Y$ is locally AMUC, then for every $y \in S_Y$ and every $t >0$, there exists $\delta>0$ such that 
\[ \limsup \max\{ \|y+tz_n\|, \|y-tz_n\| \} \geq 1+\delta \]
for every weakly null sequence $(z_n)_{n \in \N} \subset Y$ such that $\|z_n\| \geq 1$ for every $n \in \N$. Moreover, the converse holds if $Y$ does not contain $\ell_1$.
\end{prop}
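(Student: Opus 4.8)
The statement splits into two implications, which I prove separately; the first is elementary, and the second is where the absence of a copy of $\ell_1$ does the work.

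\emph{Locally AMUC $\Rightarrow$ the weakly null estimate.} Fix $y\in S_Y$ and $t>0$ and set $\delta:=\tilde{\delta}_Y(y,t)/2>0$. By definition of the supremum there is $E\in\cof(Y)$ with $\inf_{z\in S_E}\max\{\|y+tz\|,\|y-tz\|\}>1+\delta$, and the monotonicity recorded right after the definition of $\tilde{\delta}_Y$ then gives $\max\{\|y+tz\|,\|y-tz\|\}\ge 1+\delta$ for every $z\in E$ with $\|z\|\ge 1$. Let $(z_n)$ be weakly null in $Y$ with $\|z_n\|\ge 1$. Since $E$ has finite codimension it is complemented, $Y=E\oplus F$ with $\dim F<\infty$ and a bounded projection $P$ onto $F$ with $\ker P=E$; as $P$ is weak-to-weak continuous and $F$ is finite dimensional, $\|Pz_n\|\to 0$. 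Put $w_n:=z_n-Pz_n\in E$, so that $\|z_n-w_n\|\to 0$ and $\|w_n\|\ge 1-\|Pz_n\|$. When $\|w_n\|\ge 1$ the displayed inequality applies to $w_n$ directly; when $\|w_n\|<1$, renormalise to $v_n:=w_n/\|w_n\|\in S_E$ and use $\|y\pm tw_n\|\ge\|w_n\|\,\|y\pm tv_n\|-(1-\|w_n\|)$. In either case $\max\{\|y+tz_n\|,\|y-tz_n\|\}\ge 1+\delta-C\|Pz_n\|$ for a constant $C=C(\delta,t)$, so letting $n\to\infty$ gives even $\liminf_n\max\{\|y+tz_n\|,\|y-tz_n\|\}\ge 1+\delta$.

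\emph{The converse, assuming $\ell_1\not\hookrightarrow Y$.} I argue by contraposition: suppose $\tilde{\delta}_Y(y_0,t_0)=0$ for some $y_0\in S_Y$, $t_0>0$. From $\max\{\|a\|,\|b\|\}\ge\tfrac12\|a+b\|$ one has $\inf_{z\in S_E}\max\{\|y_0+t_0z\|,\|y_0-t_0z\|\}\ge 1$ for every $E$, so the assumption forces: for each $E\in\cof(Y)$ and each $\varepsilon>0$ there is $z\in S_E$ with $\max\{\|y_0+t_0z\|,\|y_0-t_0z\|\}\le 1+\varepsilon$. Fix $\varepsilon>0$ and set $\Lambda_\varepsilon:=\{z\in Y:\ \|y_0+t_0z\|\le 1+\varepsilon,\ \|y_0-t_0z\|\le 1+\varepsilon\}$, a closed bounded convex symmetric set, and $A_\varepsilon:=\Lambda_\varepsilon\cap S_Y$. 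What we have just said is that $A_\varepsilon$ meets every $E\in\cof(Y)$; since a basic weak neighbourhood of $0$ has the form $\{x:\ |f_1(x)|,\dots,|f_k(x)|<\rho\}$ and $\bigcap_{i\le k}\ker f_i\in\cof(Y)$, this is precisely the statement that $0$ lies in the weak closure of the bounded set $A_\varepsilon$. Because $Y$ contains no copy of $\ell_1$, every point of the weak closure of a bounded set is the weak limit of a sequence from that set (equivalently $(B_{Y^{**}},w^*)$ is angelic, a standard consequence of Rosenthal's $\ell_1$ theorem, resp.\ of the Odell--Rosenthal theorem after a reduction to a separable subspace), so there is a \emph{sequence} $(z_n)\subseteq A_\varepsilon$ with $z_n\to 0$ weakly. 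By construction $\|z_n\|=1$ and $\limsup_n\max\{\|y_0+t_0z_n\|,\|y_0-t_0z_n\|\}\le 1+\varepsilon$; taking $\varepsilon$ below the $\delta>0$ that the assumed weakly null condition furnishes for $(y_0,t_0)$ contradicts that condition, so $\tilde{\delta}_Y(y_0,t_0)>0$ after all.

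The forward implication is routine. The one delicate point — and, as the Schur property of $\ell_1$ shows, the only place the converse can fail without the $\ell_1$ hypothesis — is the passage from ``$0$ is in the weak closure of $A_\varepsilon$'' to an actual weakly null sequence inside $A_\varepsilon$; I would isolate this as a separate lemma and prove it via the angelicity of $(B_{Y^{**}},w^*)$ for $\ell_1$-free $Y$, being slightly careful with the separability reduction if one prefers to invoke Odell--Rosenthal verbatim.
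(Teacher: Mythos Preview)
The paper does not prove this proposition; it is quoted as Corollary~2.3 of \cite{DKRRZ} and used as a black box, so there is no in-paper argument to compare against.

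Your argument is correct. The forward implication is handled cleanly: projecting onto a finite-dimensional complement of the witnessing $E\in\cof(Y)$ and using $\|Pz_n\|\to 0$ gives $\max\{\|y\pm tz_n\|\}\ge 1+\delta-(2+\delta+t)\|Pz_n\|$, which indeed yields the stronger $\liminf$ bound. For the converse, the contrapositive you set up is the natural one; the observation that $A_\varepsilon$ meets every $E\in\cof(Y)$, hence every basic weak neighbourhood of $0$, is exactly right, and you correctly isolate the one nontrivial step---upgrading $0\in\overline{A_\varepsilon}^{\,w}$ to a genuine weakly null sequence inside $A_\varepsilon$---as the place where the $\ell_1$-free hypothesis is needed. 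That passage is a known (if somewhat delicate) consequence of Rosenthal's theorem combined with a separable reduction and the Odell--Rosenthal description of $(B_{Y^{**}},w^*)$; your closing remark about isolating it as a lemma and handling the separability carefully is well taken.
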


We now introduce the family of spaces we will us as domain spaces for the counterexamples.

\subsection{A family of Banach spaces}

We say that a sequence $w=(w_n)_{n \in \N}$ of positive numbers is \textit{admissible} if it is decreasing, $w_1=1$ and $w \in c_0 \setminus \ell_1$. If $w$ is an admissible sequence, we can define an associated Banach space $d_*(w)$ as follows: let 
\[ d_*(w)=\Big\{ x=(x_n)_{n \in \N} \in c_0; \ \liminfty \frac{\sum_{k=1}^n \tilde{x}_k}{\sum_{k=1}^n w_k}=0  \Big\} \]
where $(\tilde{x}_n)_{n \in \N}$ is the decreasing rearrangement of $(|x_n|)_{n \in \N}$, endowed with the norm 
\[ \forall x \in d_*(w), \ \|x\|= \sup_{n \in \N} \frac{\sum_{k=1}^n \tilde{x}_k}{\sum_{k=1}^n w_k} . \]

The space $d_*(w)$ is known to be a predual of a Lorentz sequence space (see \cite{Garling}, \cite{Sargent}) and it has a symmetric basis $(e_n)_{n \in \N}$ that shares properties with the one of $c_0$. We will in particular use the following one, which proof can be found in \cite{JSP}, or \cite{Go} in the special case $w=(1/n)_{n \in \N}$.

\begin{lemma} \label{lemme pts extremaux}
For every $x \in S_{d_*(w)}$, we can find $m \in \N$ and $\delta \in (0,1)$ so that 
\[ \|x+\lambda e_n\| \leq 1 \]
for every $n \geq m$ and every $\lambda \in \K$ so that $|\lambda| \leq \delta$.
\end{lemma}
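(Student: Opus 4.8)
The plan is to argue directly from the explicit formula for the norm of $d_*(w)$, reducing everything to a statement about decreasing rearrangements and their partial sums. Fix $x \in S_{d_*(w)}$, write $W_j := \sum_{k=1}^j w_k$, and let $(\tilde x_k)_k$ be the decreasing rearrangement of $(|x_k|)_k$. For $y = x + \lambda e_n$ with $|\lambda| \le \delta$, the $n$-th coordinate of $y$ has modulus at most $c := |x_n| + \delta$ and all other coordinates have the same moduli as those of $x$. Since $\|y\| \le 1$ is equivalent to $\sum_{k=1}^j \tilde y_k \le W_j$ for every $j$, it suffices to prove that $S_j(n,c) \le W_j$ for every $j$, where $S_j(n,c)$ denotes the sum of the $j$ largest elements of the multiset $\{|x_k| : k \ne n\} \cup \{c\}$.

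The first ingredient is a uniform estimate: for every $n$, every $j$ and every $c \le |x_n| + \delta$ one has $S_j(n,c) \le \sum_{k=1}^j \tilde x_k + \delta$, which is a short comparison of $\{|x_k| : k \ne n\} \cup \{c\}$ with the multiset of all the $|x_k|$, split according to whether or not $c$ lies among the $j$ largest elements of the former. The second ingredient uses the membership $x \in d_*(w)$, which gives $\sum_{k=1}^j \tilde x_k / W_j \to 0$, together with $w \notin \ell_1$, which gives $W_j \to \infty$: hence $W_j - \sum_{k=1}^j \tilde x_k \to \infty$, so there is $J$ with $W_j - \sum_{k=1}^j \tilde x_k \ge 1$ for all $j \ge J$. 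Combined with the first ingredient and the requirement $\delta \le 1$, this already yields $S_j(n,c) \le W_j$ for all $j \ge J$, with no condition on $n$.

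It remains to treat the finitely many indices $1 \le j < J$. If $W_j - \sum_{k=1}^j \tilde x_k > 0$, the uniform estimate suffices provided $\delta$ is at most this slack. If instead $\sum_{k=1}^j \tilde x_k = W_j$, observe first that necessarily $\tilde x_j > 0$: otherwise $\sum_{k=1}^{j-1} \tilde x_k = W_j > W_{j-1}$, contradicting $\|x\| = 1$. Then exploit $x \in c_0$: choose $m$ so that $|x_n| < \tilde x_j$ for all $n \ge m$ and all (finitely many) such $j$, and make $\delta$ small enough that $c = |x_n| + \delta < \tilde x_j$ as well; then $|x_n|$ was not among the $j$ largest of $(|x_k|)_k$ and $c$ is not among the $j$ largest of $\{|x_k| : k \ne n\} \cup \{c\}$, so these $j$ largest elements are exactly $\tilde x_1, \dots, \tilde x_j$ and $S_j(n,c) = \sum_{k=1}^j \tilde x_k = W_j$. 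Concretely, let $\delta_0$ be the minimum of $1$, of the numbers $\tilde x_j$ over the tight indices $j < J$, and of the slacks $W_j - \sum_{k=1}^j \tilde x_k$ over the non-tight indices $j < J$ — a finite set of strictly positive reals — set $\delta := \delta_0/2$, and pick $m$ with $|x_n| < \delta_0/2$ for all $n \ge m$.

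I expect the only mildly delicate point to be the bookkeeping around the tight indices: one must notice that tightness forces $\tilde x_j > 0$, so that the requirement "$c < \tilde x_j$" can be met, and then use that $x \in c_0$ to push the $n$-th coordinate below all of these finitely many positive thresholds, which is precisely what leaves the rearrangement undisturbed there. Everything else reduces to the elementary estimate of the first ingredient and the limit computation of the second, so I do not anticipate any genuine obstacle.
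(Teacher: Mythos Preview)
The paper does not supply its own proof of this lemma; it simply refers the reader to \cite{JSP} and, for the special case $w=(1/n)$, to \cite{Go}. Your argument is correct and is precisely the natural direct verification from the explicit formula for the $d_*(w)$-norm: bound $\sum_{k\le j}\tilde y_k$ by $W_j$ for every $j$, splitting into large $j$ (where $W_j-\sum_{k\le j}\tilde x_k\to\infty$, a consequence of $x\in d_*(w)$ and $w\notin\ell_1$, yields a uniform margin absorbing the $+\delta$ from the first ingredient) and the finitely many small $j$ (where either there is positive slack, or tightness $\sum_{k\le j}\tilde x_k=W_j$ forces $\tilde x_j>0$ and one uses $x\in c_0$ to push $|x_n|+\delta$ below $\tilde x_j$, leaving the top of the rearrangement untouched). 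The one genuinely delicate step, which you treat correctly, is checking that a tight index $j$ must have $\tilde x_j>0$: your observation that $\tilde x_j=0$ would give $\sum_{k\le j-1}\tilde x_k=W_j>W_{j-1}$ and hence $\|x\|>1$ is exactly what is required (and the case $j=1$ is immediate since $W_1=w_1=1$). The $1$-Lipschitz estimate in your first ingredient is also sound, since the sum of the $j$ largest absolute values is a supremum of linear functionals with $\ell_1$-norm $1$ restricted to $j$ coordinates, hence $1$-Lipschitz for the $\ell_\infty$-norm, and you are perturbing a single coordinate by at most $\delta$.
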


We finish this subsection with definitions about sequences  

\subsection{Symmetric basic sequences and upper $p$-estimates}

First of all, let us recall that a \textit{basic} sequence is an infinite sequence that is a basis of its closed linear span. If $(x_n)_{n \in \N}$ is a basis of a Banach space $X$, it is said to be \textit{symmetric} if every permutation $(x_{\sigma(n)})_{n \in \N}$ if $(x_n)_{n \in \N}$ is a basis of $X$, equivalent to the basis $(x_n)_{n \in \N}$. \\

Let us now recall the definition of having upper $p$-estimates, $1<p<\infty$, for a sequence. Let $X$ be a Banach space and $p \in (1, \infty)$. We say that a normalized sequence $(x_n)_{n \in \N}$ of elements of $X$ has \textit{upper $p$-estimates} if there exists a constant $C>0$ such that 
\[ \Big\| \sum_{k=1}^n a_k x_k \Big\| \leq C \Big( \sum_{k=1}^n |a_k|^p \Big)^{1/p} \]
for every $n \in \N$ and every sequence of scalars $(a_n)_{n \in \N} \subset \R$. \\

If we denote by $(f_n)_{n \in \N}$ the canonical vector basis of $\ell_p$, one can note that the existence of a normalized sequence $(x_n)_{n \in \N}$ in $X$ with upper $p$-estimates is equivalent to the existence of an operator $\alpha \in \Lin(\ell_p,X)$ satisfying $\|\alpha(f_n)\|=1$ for every $n \in \N$. \\

For a non-exhaustive list of Banach spaces with upper $p$-estimates, one can refer to \cite{Gonzalo}, \cite{DGJ}. 

\section{Results}

Theorem \ref{thm ppal} will be deduced from the following key proposition, which links norm-attaining operators and locally AMUC Banach spaces.

\begin{prop}
If $Y$ is a locally AMUC Banach space and $w$ is an admissible sequence, then every norm-attaining operator $T \in \NA(d_*(w),Y)$ from $d_*(w)$ into $Y$ satisfies $\lim \|Te_n\|=0$.
\end{prop}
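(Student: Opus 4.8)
The strategy is to argue by contradiction, combining the description of the unit sphere of $d_*(w)$ given by Lemma \ref{lemme pts extremaux} with the sequential characterisation of local AMUC from Proposition \ref{prop DKRRZ}. We may assume $T \neq 0$ and, dividing by $\|T\|$, that $\|T\|=1$. Since $T$ attains its norm, there is $x \in S_{d_*(w)}$ with $\|Tx\|=1$; put $y:=Tx$, so that $y \in S_Y$.

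First I would apply Lemma \ref{lemme pts extremaux} to this $x$, obtaining $m \in \N$ and $\delta \in (0,1)$ with $\|x \pm \delta e_n\| \leq 1$ for all $n \geq m$. Applying the norm-one operator $T$ gives
\[ \max\{\|y + \delta Te_n\|,\ \|y - \delta Te_n\|\} = \max\{\|T(x + \delta e_n)\|,\ \|T(x - \delta e_n)\|\} \leq 1 \]
for every $n \geq m$. This one-sided bound says that, along the tail of the basis, the directions $Te_n$ are ``badly convex'' at $y$. Now suppose, towards a contradiction, that $\|Te_n\| \not\to 0$. Then there are $\alpha>0$ and a subsequence $(e_{n_k})_k$ with $n_k \geq m$ and $\|Te_{n_k}\| \geq \alpha$ for all $k$. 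Since $d_*(w)$ is a $c_0$-like Lorentz predual, its symmetric basis $(e_n)$ is weakly null (see \cite{Garling}, \cite{Sargent}; equivalently, $(e_n)$ is not equivalent to the $\ell_1$-basis because $\|\sum_{k=1}^{n} e_k\|/n = (\sum_{k=1}^{n} w_k)^{-1} \to 0$ as $w \in c_0 \setminus \ell_1$), hence $(Te_{n_k})_k$ is weakly null in $Y$. Consequently $z_k := \alpha^{-1}Te_{n_k}$ defines a weakly null sequence with $\|z_k\| \geq 1$ for every $k$.

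Finally I would apply Proposition \ref{prop DKRRZ} to $y \in S_Y$ and $t:=\delta\alpha>0$: it produces $\delta_0>0$ such that $\limsup_k \max\{\|y + \delta\alpha\, z_k\|,\ \|y - \delta\alpha\, z_k\|\} \geq 1+\delta_0$. But $\delta\alpha\, z_k = \delta Te_{n_k}$, so this contradicts the bound of the previous paragraph, which forces $\max\{\|y + \delta Te_{n_k}\|,\ \|y - \delta Te_{n_k}\|\} \leq 1$ for every $k$. Hence $\|Te_n\| \to 0$, as claimed. The argument is short once the right objects are in place; the only point requiring a little care is the weak nullity of $(e_n)$ in $d_*(w)$, and I expect no real obstacle beyond bookkeeping the subsequence and the scaling factor $\alpha$ (which is exactly what turns the ``$\|Te_{n_k}\|\geq\alpha$'' information into an admissible input for Proposition \ref{prop DKRRZ}).
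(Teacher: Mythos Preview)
Your argument is correct and follows the same route as the paper's proof: normalise so that $\|T\|=1$, pick a norm-attaining point $x$, apply Lemma~\ref{lemme pts extremaux} to obtain $\|x\pm\delta e_n\|\le 1$ on a tail, and then derive a contradiction with Proposition~\ref{prop DKRRZ} along a subsequence of $(Te_n)$ that is bounded below in norm. You are in fact a bit more explicit than the paper in justifying the weak nullity of $(e_n)$ and in rescaling $z_k=\alpha^{-1}Te_{n_k}$ before invoking Proposition~\ref{prop DKRRZ}, but the underlying idea is identical.
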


\begin{proof}
Let $T \in \NA(d_*(w),Y)$. Without loss of generality, we can assume $\|T\|=1$. Then, there exists $x \in B_{d_*(w)}$ so that $\|Tx\|=\|T\|=1$ and by assumption on $Y$ and Proposition \ref{prop DKRRZ}, for every $t>0$ we can find $\delta(t)>0$ such that 
\[ \limsup \max\{ \|Tx+tz_n\|, \|Tx-tz_n\| \} \geq 1+\delta(t) \]
for every weakly null sequence $(z_n)_{n \in \N} \subset Y$ such that $\|z_n\| \geq 1$ for every $n \in \N$. \\
Moreover, by Lemma \ref{lemme pts extremaux}, we can find $\delta \in (0,1)$ and $m \in \N$ so that $x \pm \delta e_n \in B_{d_*(w)}$ for every $n \geq m$. \\
Assume now by contradiction that $\eta=\lim \|Te_n\|>0$ and let $\epsilon= \delta \big( \frac{\delta \eta}{2} \big) >0$. Up to extraction, we can assume without loss of generality that $\|Te_n\| \geq \frac{\eta}{2}$ for every $n \in \N$. Since the sequence $(Te_n)_{n \in \N}$ is weakly null, we get
\[ 1+\varepsilon \leq \limsup \max\{ \|Tx+\delta Te_n \|, \|Tx-\delta Te_n\| \} . \]
As $\|x \pm \delta e_n\| \leq 1$ for every $n \geq m$, the contradiction follows from
\[ 1+\varepsilon \leq \limsup \max \{ \|T(x+\delta e_n) \|, \|T(x-\delta e_n)\| \} \leq \|T\| \leq 1 . \qedhere \]
\end{proof}

Let us now describe how to deduce Theorem \ref{thm ppal} from the previous proposition. We will do it in two steps, starting with locally AMUC spaces that contain a normalized, symmetric basic sequence which is not equivalent to the unit vector basis in $\ell_1$. In \cite{Aguirre}, Aguirre proved the following result.

\begin{prop}[Proposition $4$ \cite{Aguirre}]
Let $Y$ be a Banach space containing a normalized, symmetric basic sequence $(y_n)_{n \in \N}$ which is not equivalent to the unit vector basis in $\ell_1$. Then there is an admissible sequence $w$ and an operator $T \in \Lin(d_*(w),Y)$ such that
\[ \forall n \in \N, \ Te_n=y_n . \]
\end{prop}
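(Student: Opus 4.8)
\emph{Plan.} The plan is to put $Te_n:=y_n$ and to choose the admissible sequence $w$ delicately enough that this prescription extends to a bounded operator on $d_*(w)$. Write $\phi(n):=\big\|\sum_{k=1}^n y_k\big\|$ for the fundamental function of the symmetric basic sequence $(y_n)$, and let $K$ be its symmetry constant. Then $\phi$ is non-decreasing, $\phi(n)\le n$, and $\phi$ is equivalent to a quasi-concave function (so, after replacing $\phi$ by an equivalent one, $\phi(n)/n$ is non-increasing). Moreover, a normalized symmetric basic sequence with $\inf_n\phi(n)/n>0$ is equivalent to the unit vector basis of $\ell_1$, so our hypothesis gives $\phi(n)/n\to 0$.

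The core of the proof is the choice of $w$. I claim one can pick $w$ admissible --- decreasing, $w_1=1$, $w\in c_0\setminus\ell_1$ --- such that in addition $\sum_n w_n\big(\phi(n)-\phi(n-1)\big)<\infty$ (with $\phi(0)=0$). Since $\phi(n)/n$ is non-increasing, $\sum_{j=1}^n\phi(j)/j\ge n\cdot\phi(n)/n=\phi(n)=\sum_{j=1}^n\big(\phi(j)-\phi(j-1)\big)$ for every $n$, so by summation by parts (using that $w$ is non-increasing) it suffices to arrange $\sum_n w_n\,\phi(n)/n<\infty$. This is exactly where $\phi(n)/n\to 0$ enters: split $\N$ into consecutive blocks $I_j=[N_j,N_{j+1})$ on which $\phi(n)/n\le 4^{-j}$, choosing the $N_j$ --- which may be taken arbitrarily large, precisely because $\phi(n)/n\to 0$ --- so that moreover $|I_j|\ge 8|I_{j-1}|$; put $w\equiv c_j$ on $I_j$ with $c_j=4^{j}/(j^2|I_j|)$, rescaled on $I_0$ so that $w_1=1$. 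One then checks that $w$ is decreasing and tends to $0$, that $\sum_n w_n=\sum_j c_j|I_j|\asymp\sum_j 4^{j}/j^{2}=+\infty$, and that $\sum_n w_n\,\phi(n)/n\le\sum_j c_j|I_j|\,4^{-j}\asymp\sum_j j^{-2}<\infty$. I expect this bookkeeping --- keeping $w$ decreasing and non-summable while the weighted series converges --- to be the only genuinely delicate point.

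With $w$ fixed, recall that $d_*(w)^*=d(w,1)$, the Lorentz space with norm $\|b\|_{d(w,1)}=\sum_n\tilde b_n w_n$, and that the Hardy--Littlewood rearrangement inequality gives $\sum_n|a_nb_n|\le\|(a_n)\|_{d_*(w)}\|(b_n)\|_{d(w,1)}$. The essential estimate is that for every $g\in Y^*$,
\[ \big\|(g(y_n))_n\big\|_{d(w,1)}\le C\|g\|,\qquad C:=2K\sum_n w_n\big(\phi(n)-\phi(n-1)\big)<\infty. \]
Indeed, for each $m$, choosing the $m$ indices $k$ where $|g(y_k)|$ is largest and testing $g$ against $\sum\sgn(g(y_k))\,y_k$ over that set, symmetry of $(y_n)$ gives $\sum_{k=1}^m\big(|g(y_k)|\big)^*_k\le K\,\phi(m)\,\|g\|$, where $(\cdot)^*$ is the decreasing rearrangement; since $w$ is non-increasing, summation by parts then yields the displayed bound. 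Finally, for a finitely supported $x=\sum_n a_ne_n$,
\[ \|Tx\|=\sup_{g\in B_{Y^*}}\Big|\sum_n a_ng(y_n)\Big|\le\|x\|_{d_*(w)}\sup_{g\in B_{Y^*}}\big\|(g(y_n))_n\big\|_{d(w,1)}\le C\,\|x\|_{d_*(w)}, \]
so $T$ extends by density to a bounded operator $d_*(w)\to Y$ satisfying $Te_n=y_n$ for every $n$.

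The remaining ingredients --- sub-linearity and quasi-concavity of $\phi$, the identification $d_*(w)^*=d(w,1)$ and the associated Hölder inequality --- are standard facts about symmetric bases and Lorentz sequence spaces; the non-routine step is the weight construction of the second paragraph.
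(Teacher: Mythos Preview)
The paper does not prove this proposition; it merely quotes it from Aguirre \cite{Aguirre} and immediately uses it to deduce the subsequent theorem. So there is no ``paper's own proof'' to compare against, and your task was effectively to reconstruct Aguirre's argument.

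Your approach is correct and is, in outline, the standard one. The three ingredients --- (i) the non-$\ell_1$ hypothesis forces $\phi(n)/n\to 0$ for the fundamental function $\phi(n)=\|\sum_{k\le n}y_k\|$; (ii) the duality $d_*(w)^*=d(w,1)$ together with the symmetric-basis estimate $\sum_{k\le m}\bigl(|g(y_k)|\bigr)^*_k\le K\phi(m)\|g\|$, which via Abel summation yields $\|(g(y_n))\|_{d(w,1)}\le K\|g\|\sum_n(w_n-w_{n+1})\phi(n)$; (iii) a construction of an admissible $w$ with $\sum_n(w_n-w_{n+1})\phi(n)<\infty$ --- combine exactly as you indicate.

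Two small points to tighten. First, the passage from $\phi$ to an equivalent quasi-concave $\tilde\phi$ (so that $\tilde\phi(n)/n$ is non-increasing) should be kept explicit throughout: the Abel estimate uses the genuine $\phi$, while your weight construction targets $\tilde\phi(n)/n$; the equivalence $\phi\le C\tilde\phi$ transfers the conclusion, and the boundary term $w_N\phi(N)$ does tend to $0$ under your block sizes, but say so. Second, your $w$ is constant on blocks, hence only non-increasing; the paper's definition of ``admissible'' asks for a decreasing sequence, so either note that the space $d_*(w)$ is unchanged under this relaxation or perturb each block by a strictly decreasing null correction. These are bookkeeping issues; the substance of your argument is sound.
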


As a consequence, we immediately get

\begin{theorem}
Let $Y$ be a locally AMUC Banach space which has a normalized, symmetric basic sequence $(y_n)_{n \in \N}$ which is not equivalent to the unit vector basis in $\ell_1$. Then $Y$ fails property B.
\end{theorem}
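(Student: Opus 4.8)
The plan is to combine the two previous results directly. Given a locally AMUC Banach space $Y$ containing a normalized, symmetric basic sequence $(y_n)_{n \in \N}$ not equivalent to the unit vector basis in $\ell_1$, Proposition (Proposition 4 of \cite{Aguirre}) produces an admissible sequence $w$ and an operator $T \in \Lin(d_*(w),Y)$ with $Te_n = y_n$ for every $n \in \N$. The key proposition, applied to this $Y$ and this $w$, asserts that every norm-attaining operator $S \in \NA(d_*(w),Y)$ satisfies $\lim \|Se_n\| = 0$. So it suffices to show that $T$ itself cannot be approximated in operator norm by norm-attaining operators, which will witness that $\NA(d_*(w),Y)$ is not dense in $\Lin(d_*(w),Y)$, hence that $Y$ fails property B.

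The argument is then a short contradiction. Since $(y_n)$ is normalized, $\|Te_n\| = \|y_n\| = 1$ for all $n$. Suppose for contradiction that there is a sequence $(S_k)_{k \in \N} \subset \NA(d_*(w),Y)$ with $\|S_k - T\| \to 0$. Fix $k$ large enough that $\|S_k - T\| < 1/2$. Then for every $n$, $\|S_k e_n\| \geq \|Te_n\| - \|S_k - T\| > 1/2$, so $\liminf_n \|S_k e_n\| \geq 1/2 > 0$. But the key proposition forces $\lim_n \|S_k e_n\| = 0$, a contradiction. Hence no such approximating sequence exists, $T \in \Lin(d_*(w),Y) \setminus \overline{\NA(d_*(w),Y)}$, and $Y$ fails property B by definition.

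There is essentially no obstacle here: the theorem is an immediate corollary, and all the real work has already been carried out in the key proposition (which uses the local AMUC hypothesis through Proposition \ref{prop DKRRZ} and the structure of $d_*(w)$ through Lemma \ref{lemme pts extremaux}) and in Aguirre's construction of the operator $T$. The only point worth stating carefully is that one uses the \emph{normalization} $\|y_n\| = 1$ together with a perturbation bound strictly less than $1$ to keep $\|S_k e_n\|$ bounded away from $0$; any positive lower bound on $\liminf_n \|S_k e_n\|$ suffices to contradict the conclusion $\lim_n \|S_k e_n\| = 0$.
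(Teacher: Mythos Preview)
Your proposal is correct and matches the paper's approach exactly: the paper treats this theorem as an immediate consequence of Aguirre's Proposition~4 and the key proposition, and you have simply written out the obvious perturbation argument that makes ``immediate'' explicit. The only implicit fact you use is that $\|e_n\|_{d_*(w)}=1$ (so that $\|S_k e_n - T e_n\| \leq \|S_k - T\|$), which holds since $w_1 = 1$.
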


From now on, let $w=\big( \frac{1}{n}\big)_{n \in \N}$, and $G=d_*(w)$, the space used by Gowers in \cite{Go} to prove that $\ell_p$ does not have property B for $1<p<\infty$. \\

Proposition $7$ from \cite{Aguirre} asserts that if $Y$ is a Banach space in which we can find a normalized sequence with upper $p$-estimates, then there exists a noncompact operator $T$ from $G$ into $Y$. By looking at the proof, one even gets the following proposition. For sake of completeness, we introduce a proof. 

\begin{prop}
Let $Y$ be a Banach space in which we can find a normalized sequence $(y_n) \subset Y$ with upper $p$-estimates. Then there exists a bounded linear operator $T$ from $G$ into $Y$ such that $\|Te_n\|=1$ for every $n \in \N$.
\end{prop}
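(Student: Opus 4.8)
The plan is to factor the desired operator through $\ell_p$. By the observation recorded just after the definition of upper $p$-estimates, the hypothesis on $Y$ furnishes an operator $\alpha \in \Lin(\ell_p,Y)$ with $\|\alpha(f_n)\|=1$ for every $n\in\N$, where $(f_n)_{n\in\N}$ is the canonical basis of $\ell_p$. It therefore suffices to produce a bounded operator $S \in \Lin(G,\ell_p)$ with $Se_n=f_n$ for all $n$: then $T:=\alpha\circ S$ is bounded and $Te_n=\alpha(f_n)$, so $\|Te_n\|=1$.

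To construct $S$, I would define it on the linear span of $(e_n)_{n\in\N}$ by $S\big(\sum_n a_ne_n\big)=\sum_n a_nf_n$. Since $(e_n)_{n\in\N}$ is a basis of $G$, this span is dense, so it is enough to check that $S$ is bounded there; then $S$ extends to $G$, and $Se_n=f_n$ persists because $e_n$ is finitely supported. Concretely, I must show that there is $C>0$ with $\big(\sum_n|a_n|^p\big)^{1/p}\le C\,\big\|\sum_n a_ne_n\big\|_G$ for every finitely supported scalar sequence $(a_n)$. Both norms are unchanged under permutations of the coordinates and under changes of sign — for the $G$-norm this is exactly because it is defined through the decreasing rearrangement of $(|x_n|)_{n\in\N}$ — so I may assume $a_1\ge a_2\ge\cdots\ge 0$, with only finitely many nonzero terms, so that $(a_n)$ is its own decreasing rearrangement.

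Put $M=\big\|\sum_n a_ne_n\big\|_G=\sup_n \dfrac{\sum_{k=1}^n a_k}{H_n}$ with $H_n=\sum_{k=1}^n 1/k$. Monotonicity of $(a_k)_k$ gives $n a_n\le \sum_{k=1}^n a_k\le M H_n$, hence $a_n\le MH_n/n$ and $\sum_n a_n^p\le M^p\sum_n (H_n/n)^p$. The point — and the only place where the standing assumptions $p>1$ and $w=(1/n)_{n\in\N}$ enter — is that $\sum_n (H_n/n)^p<\infty$: since $H_n=O(\log n)$ we have $(H_n/n)^p=O\big(n^{-p}(\log n)^p\big)$, which is summable precisely because $p>1$. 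Taking $C=\big(\sum_n (H_n/n)^p\big)^{1/p}$ gives $\|Sx\|_{\ell_p}\le CM=C\|x\|_G$, as required, and this completes the construction.

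I do not anticipate a real obstacle: the whole argument reduces to the elementary estimate $a_n\le MH_n/n$ together with the convergence of $\sum_n (H_n/n)^p$, which is where $p>1$ is genuinely needed. The only mild subtlety is to make sure that passing to nonnegative, nonincreasing coefficients leaves both sides untouched, which rests on the symmetry built into the definition of $\|\cdot\|_G$.
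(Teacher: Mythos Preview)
Your proof is correct and follows the same strategy as the paper: factor $T$ as the formal identity $S:G\to\ell_p$ composed with the operator $\alpha:\ell_p\to Y$ coming from the upper $p$-estimates. The only difference is that the paper simply cites \cite{Go} for the boundedness of $S$, whereas you supply the explicit estimate $a_n\le M H_n/n$ and the convergence of $\sum_n (H_n/n)^p$; this is exactly the computation behind the cited fact, so the arguments coincide.
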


\begin{proof}
From the hypothesis on the sequence $(y_n)_{n \in \N}$, we get a bounded linear operator $U \in \Lin(\ell_p, Y)$ sending the elements of the canonical basis of $\ell_p$ on the sequence $(y_n)_{n \in \N}$. Moreover, the space $G$ is known to be contained in $\ell_p$ and the formal identity defines a bounded linear operator $S \in \Lin(G, \ell_p)$ (see \cite{Go}, p.149). To conclude, just note that $T= US \in \Lin(G,Y)$ satisfies $\|Te_n\|=\|y_n\|=1$ for every $n \in \N$.
\end{proof}

The following result follows immediately.

\begin{theorem}
Let $Y$ be a locally AMUC space which has a normalized sequence with upper $p$-estimates, $1<p<\infty$. Then $Y$ fails property B.
\end{theorem}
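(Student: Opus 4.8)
The plan is to combine the previous proposition (every $T\in\NA(G,Y)$ satisfies $\lim\|Te_n\|=0$) with the operator produced just above, which has $\|Te_n\|=1$ for all $n$. Concretely, from the hypothesis that $Y$ has a normalized sequence with upper $p$-estimates for some $1<p<\infty$, the preceding proposition furnishes a bounded linear operator $T\in\Lin(G,Y)$ with $\|Te_n\|=1$ for every $n\in\N$. I would first observe that this $T$ cannot be approximated by norm-attaining operators: if $S\in\NA(G,Y)$, then $S\in\NA(d_*(w),Y)$ with $w=(1/n)_{n\in\N}$, so by the key proposition $\lim_n\|Se_n\|=0$, whence $\|T-S\|\geq\limsup_n\|(T-S)e_n\|\geq\limsup_n(\|Te_n\|-\|Se_n\|)=1$, using $\|e_n\|=1$ in $G$. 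Thus $\dist(T,\NA(G,Y))\geq 1>0$, so $\NA(G,Y)$ is not dense in $\Lin(G,Y)$, and therefore $Y$ fails property B with $X=G$ as witness.

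There is essentially no obstacle here: the statement is an immediate corollary, and the only mild subtlety is making explicit that norm-attainment for operators out of $G=d_*(w)$ with $w=(1/n)_n$ is exactly the hypothesis of the key proposition (since $w=(1/n)_n$ is admissible: it is decreasing, $w_1=1$, and $(1/n)_n\in c_0\setminus\ell_1$), so that the conclusion $\lim\|Se_n\|=0$ applies verbatim. The rest is the three-line distance estimate above.

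Here is the proof I would write.

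\begin{proof}
Let $w=\big(\tfrac1n\big)_{n\in\N}$, so that $G=d_*(w)$; note that $w$ is admissible. By the previous proposition, we obtain a bounded linear operator $T\in\Lin(G,Y)$ with $\|Te_n\|=1$ for every $n\in\N$. We claim that $\dist(T,\NA(G,Y))\geq 1$, which shows that $\NA(G,Y)$ is not dense in $\Lin(G,Y)$ and hence that $Y$ fails property B.

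Indeed, let $S\in\NA(G,Y)=\NA(d_*(w),Y)$. Since $Y$ is locally AMUC and $w$ is admissible, the key proposition gives $\lim_n\|Se_n\|=0$. As $\|e_n\|=1$ in $G$ for every $n$, we get
\[
\|T-S\|\;\geq\;\limsup_{n\to\infty}\|(T-S)e_n\|\;\geq\;\limsup_{n\to\infty}\big(\|Te_n\|-\|Se_n\|\big)\;=\;1.
\]
Thus $T$ is at distance at least $1$ from $\NA(G,Y)$, which concludes the proof.
\end{proof}
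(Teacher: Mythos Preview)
Your proof is correct and follows exactly the approach the paper indicates (the paper merely writes ``The following result follows immediately'' after the two propositions you invoke). You have supplied the straightforward distance estimate that the paper leaves implicit; there is nothing to add or correct.
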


Theorem \ref{thm ppal} gives us new examples of spaces without property B. Before mentioning such a class of new examples, let us remind the definition of $\ell_p$-sums. Let $(X_n)_{n \in \N}$ be a sequence of Banach spaces and $p \in [1, \infty)$. We define the sum $\left( \sum_{n \in \N} X_n \right)_{\ell_p}$ to be the space of sequences $(x_n)_{n \in \N}$, where $x_n \in X_n$ for all $n \in \N$, such that $\sum_{n \in \N} \|x_n\|_{X_n}^p$ is finite, and we set
\[ \|(x_n)_{n \in \N} \| = \Big( \sum_{n \in \N} \|x_n\|_{X_n}^p \Big)^{\frac{1}{p}} . \]
One can check that $\left( \sum_{n \in \N} X_n \right)_{\ell_p}$, endowed with the norm $\|\cdot\|$ defined above, is a Banach space. Now, we can state our last result.

\begin{corollary}
Let $(X_n)_{n \in \N}$ be a sequence of Banach spaces such that $X_n \neq \{0\}$ for every $n \in \N$. Then any infinite dimensional subspace of $\big( \sum_{n \in \N} X_n \big)_{\ell_p}$ fails property B.
\end{corollary}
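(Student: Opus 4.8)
The plan is to derive this from Theorem~\ref{thm ppal}: it suffices to show that every infinite-dimensional subspace $Y$ of $Z:=\big(\sum_{n\in\N}X_n\big)_{\ell_p}$ is locally AMUC and contains a normalized sequence with upper $p$-estimates.

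First I would observe that being locally AMUC passes to subspaces: if $Y\subseteq Z$ and $y\in S_Y$, then for every $E\in\cof(Z)$ we have $E\cap Y\in\cof(Y)$ and $S_{E\cap Y}\subseteq S_E$, so $\tilde{\delta}_Y(y,t)\geq\tilde{\delta}_Z(y,t)$; hence it is enough to check that $Z$ itself is locally AMUC. For this I would use the weakly null reformulation of Proposition~\ref{prop DKRRZ}. Write $P_N$ for the natural norm-one projection of $Z$ onto $\big(\sum_{n\leq N}X_n\big)_{\ell_p}$ and $Q_N=I-P_N$. Fix $y\in S_Z$ and $t>0$ and pick $N$ with $\|Q_Ny\|<\varepsilon$. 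If $(z_k)\subset Z$ is weakly null with $\|z_k\|\geq 1$, then $P_Nz_k\to 0$ (automatic when the $X_n$ are finite-dimensional, since $P_N$ is then of finite rank) and $\liminf_k\|Q_Nz_k\|\geq 1$, so the $\ell_p$-identity $\|u\|^p=\|P_Nu\|^p+\|Q_Nu\|^p$ gives
\[
\liminf_k\|y+tz_k\|^p \;\geq\; \|P_Ny\|^p+(t-\varepsilon)^p \;\geq\; (1-\varepsilon^p)+(t-\varepsilon)^p,
\]
which exceeds $1$ as soon as $\varepsilon<t/2$. Since $Z$ contains no copy of $\ell_1$ (for instance because it is reflexive when the $X_n$ are reflexive), the converse direction of Proposition~\ref{prop DKRRZ} then yields that $Z$, and hence $Y$, is locally AMUC.

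For the second property I would use that the natural decomposition $Z=\bigoplus_n X_n$ is an $\ell_p$-decomposition: any normalized block sequence $(u_k)$ relative to it satisfies $\big\|\sum_k a_ku_k\big\|=\big(\sum_k|a_k|^p\big)^{1/p}$, so it is isometrically equivalent to the unit vector basis of $\ell_p$ and in particular has upper $p$-estimates. A gliding-hump (Bessaga--Pe\l czy\'nski) argument relative to this decomposition then produces, inside any infinite-dimensional subspace $Y$, a normalized basic sequence $(1+\epsilon)$-equivalent to such a block sequence, hence a normalized sequence with upper $p$-estimates. With both properties in hand, Theorem~\ref{thm ppal} applies and gives that $Y$ fails property B.

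I expect the real work to lie in the structural step, not in the invocation of Theorem~\ref{thm ppal}. The gliding-hump extraction is immediate when the $X_n$ are finite-dimensional (the decomposition is then an FDD, $Z$ is reflexive so contains no $\ell_1$, and $P_N$ has finite rank so $P_Nz_k\to 0$ for weakly null $(z_k)$); for general $X_n$ one must instead run a Kadec--Pe\l czy\'nski-type dichotomy and deal with the possibility that $Y$ is ``concentrated on finitely many coordinates'', i.e. that some $P_N\!\restriction_Y$ is bounded below, so that $Y$ embeds into the finite sum $X_1\oplus_p\cdots\oplus_p X_N$; one then peels off one block at a time according to whether the projection onto it is strictly singular. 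Checking in that situation that $Z$ still contains no copy of $\ell_1$ and is locally AMUC is the main obstacle; once it is settled, the rest is routine.
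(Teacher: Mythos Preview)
Your strategy --- verify that every infinite-dimensional subspace $Y$ of $Z=\big(\sum_n X_n\big)_{\ell_p}$ is locally AMUC and carries a normalized sequence with upper $p$-estimates, then invoke Theorem~\ref{thm ppal} --- is the one the paper evidently intends (the corollary is stated without proof, immediately after Theorem~\ref{thm ppal}), and for \emph{finite-dimensional} $X_n$ your sketch is essentially complete: $Z$ is then reflexive, the projections $P_N$ have finite rank so $P_Nz_k\to 0$ in norm for any weakly null $(z_k)$, the converse direction in Proposition~\ref{prop DKRRZ} is available, your lower bound $\liminf_k\|y\pm tz_k\|^p\geq 1-\varepsilon^p+(t-\varepsilon)^p$ gives a $\delta>0$ depending only on $t$ and $p$, and the gliding-hump extraction of an $\ell_p$-block sequence inside $Y$ is routine.

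The obstacle you flag in your last paragraph, however, is not a technicality to be ``settled'' but a genuine obstruction. For arbitrary summands $Z$ need not be locally AMUC and may contain $\ell_1$. Take $X_1=\ell_\infty$ and $y=(e_1,0,0,\ldots)\in S_Z$. For any $E\in\cof(Z)$ the subspace $E\cap X_1\cap\{z:z_1=0\}$ still has finite codimension in $X_1$, and any unit vector $z$ there satisfies
\[
\|y\pm tz\|_Z=\|e_1\pm tz\|_{\ell_\infty}=\max\{1,t\}=1\qquad(0<t\le 1),
\]
so $\tilde{\delta}_Z(y,t)=0$ and the hereditary argument yields nothing. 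Worse, the corollary itself fails in this generality: the subspace $Y=X_1\cong\ell_\infty$ has Lindenstrauss's property~$\beta$ (witnessed by the coordinate system $(e_n,e_n^*)$) and therefore \emph{has} property~B. Thus the statement requires an additional hypothesis --- finite-dimensional $X_n$ being the natural one --- and under that hypothesis your argument is the correct one.
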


\section{Open questions}

\begin{pb}
Does Theorem \ref{thm ppal} hold for all locally AMUC Banach spaces?
\end{pb}

One could start by the answering the following intermediate question.

\begin{pb}
If $(F_n)$ is a sequence of finite-dimensional spaces ($\dim(F_n)>0$ for every $n \in \N$), does $\big( \sum_{n \in \N} F_n \big)_{\ell_1}$ fail property B?
\end{pb}

We will finish this paper by recalling the following longstanding open problem, even for the $2$-dimensional Euclidean space.

\begin{pb}
Do finite dimensional spaces have property B?
\end{pb}

\begin{thank}
The author would like to thank Miguel Martín for very useful conversations and valuable comments.
\end{thank}

\bibliographystyle{plain}
\bibliography{biblio}

\end{document}